\theoremstyle{plain}
\newtheorem{thm}{Theorem}[section]
\newtheorem*{mainthm}{Main Theorem}
\newtheorem{lemma}[thm]{Lemma}
\newtheorem{prop}[thm]{Proposition}
\newtheorem{cor}[thm]{Corollary}
\newtheorem{claim}{Claim}
\theoremstyle{definition}
\newtheorem{example}[thm]{Example}
\theoremstyle{remark}
\newtheorem{rem}[thm]{Remark}
\newenvironment{thmenumerate}{
\begin{enumerate}[label=\textup{(\roman*)}, widest=(ii), leftmargin=9mm,itemsep=1mm,topsep=0mm]}{
\end{enumerate}}
\newcommand{\A}{\mathcal{A}}
\newcommand{\B}{\mathcal{B}}
\newcommand{\C}{\mathcal{C}}
\newcommand{\T}{\mathcal{T}}
\renewcommand{\S}{\mathcal{S}}
\newcommand{\N}{\mathbb{N}}
\newcommand{\Z}{\mathbb{Z}}
\newcommand{\x}{\mathbf{x}}
\newcommand{\y}{\mathbf{y}}
\renewcommand{\a}{\mathbf{a}}
\renewcommand{\t}{\mathbf{t}}
\newcommand{\0}{\mathbf{0}}
\newcommand{\1}{\mathbf{1}}
\DeclareMathOperator{\cont}{cont}
\DeclareMathOperator{\form}{form}
\DeclareMathOperator{\Mon}{Mon}
\DeclareMathOperator{\lcm}{lcm}
\DeclareMathOperator{\Seq}{Seq}
\newcommand{\fmin}{f_{\min}}
\title[Number of subdirect powers of unary algebras]{On the number of countable subdirect powers of unary algebras}
\author{Nik Ru\v{s}kuc}
\author{Bill de Witt}
\address{School of Mathematics and Statistics, University of St Andrews, St Andrews, Scotland, UK}
\email{$\{$bldw,nr1$\}$@st-andrews.ac.uk}
\keywords{subdirect power, unary algebra}
\subjclass[2010]{08A60, 08B26}
\date{\today}
\begin{document}

\maketitle

\begin{abstract}
A finite unary algebra $(A,F)$ has only countably many countable subdirect powers if and only if every operation $f\in F$ is either a permutation or a constant mapping.
 \end{abstract}

\section{Introduction and the statement of the main result}
\label{sec:intro}

Subdirect products have played an important role throughout the development of algebra, underpinned by Birkhoff's subdirect representation theorem \cite[Chapter 4.5]{alv87}. For groups they have been often deployed in the combinatorial aspects of the theory, e.g. theory of generators and defining relations;
see \cite{BHM:FPSP,BM:SFP} and references therein. In a recent development, Mayr and Ru\v{s}kuc \cite{MR19}
have placed these developments in a  more general algebraic context,
and there are offshoots of this work for lattices \cite{DMR20} and semigroups \cite{Cl20,clayton2020number}.

The starting point for our investigation was the result of Hickin and Plotkin \cite{hickin1981boolean} that a finitely generated non-abelian group has 
continuum many countable subdirect powers of countable size. Of course, a finitely generated abelian group has only countably many non-isomorphic countable subdirect powers, so the above result establishes a full classification. McKenzie \cite{mckenzie1982subdirect} strengthened this by proving that any non-abelian group $G$ has $2^\kappa$ non-isomorphic subdirect powers of cardinality $\kappa$, for every infinite cardinal $\kappa\geq |G|$.

It is then natural to consider the number of subdirect powers for some algebraic structures other than groups.
For instance, slightly surprisingly, in \cite{clayton2020number} it is proved that the free monogenic semigroup $\N=\{1,2,3,\dots\}$ already has uncountably many subdirect powers contained in $\N^2$.

In this paper we are going to consider this question for finite unary algebras.
By a \emph{unary algebra} we mean a structure $\A=(A,F)$, where $A$ is a set (known as the carrier set of $\A$) and $F$ is a set of unary operations on $A$.
A \emph{subdirect power} of $\A$ is a subalgebra $P$ of a cartesian product $\A^X$ which projects onto $A$ in every coordinate. 
We say that $P$ is a \emph{countable subdirect power} of $\A$ if $|P|\leq\aleph_0$.
We will prove:

\begin{mainthm}
Let $\A=(A,F)$ be a finite unary algebra. The number of non-isomorphic countable subdirect powers of $\A$ is countable if and only if every operation in $F$ is either a bijection or a constant mapping.
\end{mainthm}

We will refer to the \emph{type} of an algebra $\A$ as being countable or uncountable depending on whether $\A$ has  countably or uncountably many non-isomorphic countable subdirect powers.
Given a countable subdirect power $P\leq \A^X$, there is no a priori restriction on $|X|$, but
in fact $P$ is isomorphic to a subdirect power in $\A^\N$, and we will henceforth focus  on such powers.
Throughout the term `countable' is used to mean `finite or of size $\aleph_0$'.

We note that it follows from the Main Theorem that the unary algebras of countable type are precisely the \emph{minimal unary algebras} in the sense of
 \cite[Definition 2.14]{HM88}, which were first introduced into the literature by the name of \emph{permutational} by P\'{a}lfy and Pudl\'{a}k  \cite{PP80}.

\section{Preliminaries: unary algebras and graphs}
\label{sec:prelim}

Let $\A=(A,F)$ be a unary algebra.
The \emph{graph} $\Gamma(\A)$ of $\A$ is the directed graph with the set of vertices $A$, and a directed  edge $a\rightarrow f(a)$ for every $a\in A$, $f\in F$.
The underlying undirected graph of $\Gamma(\A)$ will be denoted by $\overline{\Gamma}(\A)$.
By a \emph{connected component} of $\A$ we will mean a connected component of $\overline{\Gamma}(\A)$.
Any such connected component is the carrier of a subalgebra of $\A$.
An isomorphism $\A\rightarrow\B$ between two unary algebras maps the connected components of $\A$ isomorphically onto the connected components of $\B$.
More precisely,
suppose that the connected components of $\A$ and $\B$ are
$\{ C^\A_i\::\: i\in I\}$, $\{C^\B_j\::\: j\in J\}$ respectively.
Then for any isomorphism $\phi:\A\rightarrow\B$ 
there exists a bijection $\pi:I\rightarrow J$ and isomorphisms $\phi_i:\C^\A_i\rightarrow \C^\B_{\pi(i)}$
of connected components considered as subalgebras such that
$\phi=\bigcup_{i\in I} \phi_i$. Conversely, any choice of $\pi$ and $\phi_i$ yields an isomorphism.
From this, we have:

\begin{lemma}
\label{le:coniso1}
Let $\A$ and $\B$ be two isomorphic unary algebras, with connected components $\{ C^\A_i\::\: i\in I\}$, $\{C^\B_i\::\: i\in I\}$ respectively.
Suppose that $I_0\subseteq I$ is a finite set, that $\pi : I_0\rightarrow I$ is an injection, and that
$\phi_i:\C^\A_i\rightarrow \C^\B_{\pi(i)}$ ($i\in I_0$) are isomorphisms of unary subalgebras.
Then there is an isomorphism $\phi:\A\rightarrow\B$ which simultaneously extends all $\phi_i$ ($i\in I_0$).
\end{lemma}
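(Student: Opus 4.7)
The plan is to leverage the characterization of isomorphisms between unary algebras given in the paragraph preceding the lemma: any isomorphism $\A\to\B$ is essentially a bijection $\pi$ between the index sets of connected components together with a choice of isomorphism on each matched pair of components. So I will (i) extend the partial index injection $\pi:I_0\to I$ to a bijection $\pi^*:I\to I$ such that $C^\A_i\cong C^\B_{\pi^*(i)}$ for every $i\in I$, and then (ii) choose isomorphisms on the new matched pairs and glue everything together.

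For step (i), I start from the fact that $\A\cong\B$. The paragraph preceding the lemma yields some bijection $\rho:I\to I$ with $C^\A_i\cong C^\B_{\rho(i)}$ for all $i$. Partition $I$ according to isomorphism type of component: for each isomorphism class $T$ of finitely-generated subalgebras arising as components, let
\[
I^\A_T=\{i\in I: C^\A_i\cong T\},\qquad I^\B_T=\{i\in I: C^\B_i\cong T\}.
\]
The existence of $\rho$ gives $|I^\A_T|=|I^\B_T|$ for every $T$. Now because each $\phi_i$ ($i\in I_0$) is an isomorphism of subalgebras, $\pi$ sends $I_0\cap I^\A_T$ injectively into $I^\B_T$. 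Since $I_0$ (hence $I_0\cap I^\A_T$) is finite and $|I^\A_T|=|I^\B_T|$, a trivial cardinal-cancellation argument shows
\[
\bigl|I^\A_T\setminus I_0\bigr|=\bigl|I^\B_T\setminus \pi(I_0\cap I^\A_T)\bigr|,
\]
so I can pick a bijection between these two complements. Taking the union over all types $T$ and combining with $\pi$ produces the required bijection $\pi^*:I\to I$.

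For step (ii), for each $i\in I\setminus I_0$ pick any isomorphism $\phi^*_i:C^\A_i\to C^\B_{\pi^*(i)}$ (which exists by the very definition of $\pi^*$), and for $i\in I_0$ put $\phi^*_i:=\phi_i$. By the converse direction of the characterization stated before the lemma, $\phi:=\bigcup_{i\in I}\phi^*_i$ is an isomorphism $\A\to\B$, and by construction it extends every $\phi_i$ ($i\in I_0$).

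The only non-bookkeeping ingredient is the cancellation step showing $|I^\A_T\setminus I_0|=|I^\B_T\setminus\pi(I_0\cap I^\A_T)|$. This is the main, though very mild, obstacle: one must separate the finite-$|I^\A_T|$ case (where ordinary subtraction applies) from the infinite-$|I^\A_T|$ case (where both complements have the same infinite cardinality as $I^\A_T$). Once this is observed, the rest is a routine assembly.
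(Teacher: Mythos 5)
Your proof is correct and follows essentially the route the paper intends: the lemma is stated there as an immediate consequence of the preceding characterization of isomorphisms as a bijection of component index sets together with component-wise isomorphisms, and your argument simply fills in the details (extend $\pi$ type-by-type using the finite/infinite cardinal cancellation, then glue). The only cosmetic slip is calling the components ``finitely-generated subalgebras''---they need not be finitely generated, but nothing in your argument uses that.
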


The reflexive, transitive closure $\rightarrow^\ast$ of the adjacency relation $\rightarrow$ in $\Gamma(\A)$ is a preorder on $A$,
noting that if $x\rightarrow^\ast y$ we consider $x$ as the larger of the two.
The associated equivalence relation $\leftrightarrow^\ast$ is given by
$a\leftrightarrow^\ast b$ if and only if $a\rightarrow^\ast b$ and $b\rightarrow^\ast a$. The equivalence classes of this relation are called the \emph{strongly connected components} of $\Gamma(\A)$ (and of $\A$). The preorder 
$\rightarrow^\ast$ on $A$ induces a partial order on the quotient $A/\!\!\leftrightarrow^\ast$. The maximal elements in this order are called the \emph{top components} of $\A$, and the minimal elements are called the \emph{bottom components} of $\A$.
%Note that in every connected component of $\A$ there can be several top components but at most one bottom component.
Strongly connected components generally are not subalgebras; however, the bottom components are. Furthermore, if $A$ is finite and all $f\in F$ are bijections, the strongly connected components coincide with connected components, and hence are all subalgebras.

If $\A$ and $\B$ are isomorphic unary algebras then of course $\Gamma(\A)$ and $\Gamma(\B)$ are isomorphic digraphs. The converse need not hold, as a typical edge $a\rightarrow f(a)$ in $\Gamma(\A)$ `forgets' about $f$.
The converse does hold in the case of \emph{mono-unary} algebras, i.e. when there is only a single basic operation.

We now describe another isomorphism invariant of unary algebras, which we will make use of in Section \ref{sec:uncountable}.
Suppose $\A$ is a connected unary algebra with a bottom component $D$.
Consider the induced subgraph of $\Gamma(\A)$ on the set $A\setminus D$.
The connected components of this subgraph will be called the \emph{outer sections} of $\Gamma(\A)$ (and of $\A$) 
with respect to $D$. If $D$ is a unique bottom component of $\A$, the reference to it may be omitted.
 Now suppose that $\phi:\A\rightarrow\B$ is an isomorphism. Then $\B$ must also be connected and 
 $E:=\phi(D)$ is a bottom of component in $\B$. But then $\phi(A\setminus D)=B\setminus E$, and it follows that $\phi$ induces a (graph) isomorphism between the induced subgraph of $\Gamma(\A)$ on $A\setminus D$ and the induced subgraph of $\Gamma(\B)$ on $B\setminus E$. This induced isomorphism must preserve the connected components of the two graphs, and we obtain the following:

\begin{lemma}
\label{la:outer}
Let $\A$ and $\B$ be two connected unary algebras and let $D$ be a bottom component of $\A$.
If $\phi:\A\rightarrow\B$ is an isomorphism, then there is a $1-1$ correspondence between the outer sections of $\A$ with respect to $D$ and the outer sections of $\B$ with respect to $\phi(D)$ such that the pairs of corresponding outer sections are isomorphic digraphs.
\end{lemma}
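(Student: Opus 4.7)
The plan is to transport the algebra isomorphism $\phi:\A\to\B$ to a digraph isomorphism of the induced subgraphs on the complements of the bottom components. First I observe that any isomorphism of unary algebras automatically induces a digraph isomorphism $\Gamma(\A)\to\Gamma(\B)$, since each edge $a\to f(a)$ is sent to $\phi(a)\to f(\phi(a))$ and edges in $\Gamma(\B)$ pull back the same way. Such a digraph isomorphism preserves the reachability preorder $\to^\ast$, hence also the equivalence $\leftrightarrow^\ast$, the strongly connected components, and the induced partial order on the quotient; in particular it maps bottom components bijectively to bottom components.

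The key (and essentially only) step is the identification $\phi(D)=E$. Since $\A$ and $\B$ are both connected, and (as recorded in the excerpt) any connected unary algebra has at most one bottom component, the previous paragraph forces the unique bottom component $D$ of $\A$ to be sent to the unique bottom component $E$ of $\B$. Consequently $\phi$ restricts to a bijection $A\setminus D\to B\setminus E$ which is simultaneously a digraph isomorphism between the induced subgraphs of $\Gamma(\A)$ on $A\setminus D$ and of $\Gamma(\B)$ on $B\setminus E$.

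The last step is routine graph bookkeeping. Any digraph isomorphism sends each connected component of the underlying undirected graph to a connected component, and restricts on each such component to a digraph isomorphism. Applied to $\phi|_{A\setminus D}$, this delivers the desired $1$-$1$ correspondence between the outer sections of $\A$ and those of $\B$, with corresponding sections isomorphic as digraphs. The main obstacle, modest as it is, lies in the preorder argument underpinning $\phi(D)=E$; once this is in place everything else is formal.
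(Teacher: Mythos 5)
Your proposal is correct and follows essentially the same route as the paper: establish $\phi(D)=E$, restrict to get a digraph isomorphism between the induced subgraphs on $A\setminus D$ and $B\setminus E$, and note that this isomorphism matches up connected components, i.e.\ outer sections. The paper simply asserts $\phi(D)=E$ where you spell out the preorder-preservation argument, but the substance is identical.
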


The \emph{monoid} $\Mon(\A)$ of a unary algebra $\A=(A,F)$ is the monoid of transformations on the set $A$ generated by $F$.  Alternatively, $\Mon(\A)$ is the unary part of the clone of $\A$; see \cite[Section 4.1]{alv87}. The clone itself consists of compositions of the elements of $\Mon(\A)$ and projections.
%We extend this notation to subsets $G\subseteq   F$ and write $\Mon(G)$ for the monoid of the reduct $(A,G)$ of $\A$.
%For an element $a\in A$ and $G\subseteq F$, we define $\langle a\rangle_G=\{ g(a)\::\: g\in\Mon(G)\}$,
%the subalgebra of the reduct generated by $a$.
%\NRcomment{How much is this used?}

Consider now the cartesian product $\A^\N$.
We will view the elements of $\A^\N$ as infinite tuples $\x=(x_1,x_2,x_3,\dots)$, $x_i\in A$.
For such a tuple define its \emph{content} as the set $\cont(\x):=\{ x_i\::\: i\in\N\}$, and its \emph{format} as the equivalence relation $\form(\x):=\{(i,j)\in\N\times\N\::\: x_i=x_j\}$.
If we view $\x$ as a mapping $\N\rightarrow A$ then $\cont(\x) $ and $\form(\x)$ are its image and kernel respectively. 
With this in mind the following is immediate:

\begin{lemma}
\label{le:formsub}
For a unary algebra $\A$, an element $\x\in A$ and $f\in \Mon(\A)$ we have $\form(\x)\subseteq \form(f(\x))$; the equality holds if and only if $f$ is injective on $\cont(\x)$.
\end{lemma}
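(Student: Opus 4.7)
The statement is a standard fact about composition of mappings, translated into the language of contents and formats, and the paper has already flagged this by noting that if we view $\x$ as a map $\N\to A$, then $\cont(\x)$ is its image and $\form(\x)$ is its kernel. So the plan is to exploit this dictionary directly, rather than do anything delicate.

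First I would establish the inclusion $\form(\x)\subseteq\form(f(\x))$. This is immediate from the definition: if $(i,j)\in\form(\x)$, so $x_i=x_j$, then applying $f$ (which is a single, well-defined function on $A$, since $f\in\Mon(\A)$) gives $f(x_i)=f(x_j)$, i.e.\ $(i,j)\in\form(f(\x))$. No use of the algebra structure is needed beyond the fact that $f$ is a function.

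Next I would handle the equality, proving the two implications separately. For the forward direction, suppose $f$ is injective on $\cont(\x)$, and take $(i,j)\in\form(f(\x))$, so $f(x_i)=f(x_j)$. Since $x_i,x_j\in\cont(\x)$, injectivity forces $x_i=x_j$, giving $(i,j)\in\form(\x)$; combined with the inclusion already proved, this yields equality. For the converse, I would prove the contrapositive: if $f$ fails to be injective on $\cont(\x)$, there exist $a\ne b$ in $\cont(\x)$ with $f(a)=f(b)$, and by definition of content we can pick indices $i,j$ with $x_i=a$, $x_j=b$. Then $(i,j)\notin\form(\x)$ but $(i,j)\in\form(f(\x))$, so the containment is strict.

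I do not expect any real obstacle; the only micro-point to be careful about is that $\Mon(\A)$ consists of honest functions $A\to A$ (compositions of elements of $F$), so $f(\x)$ is unambiguously the componentwise image $(f(x_1),f(x_2),\dots)$, and both halves of the argument reduce to the set-theoretic fact that $\ker(g)\subseteq\ker(f\circ g)$ with equality iff $f$ is injective on $\mathrm{im}(g)$.
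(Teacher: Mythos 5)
Your proof is correct and matches the paper's intent: the paper states this lemma as immediate from the observation that $\cont(\x)$ and $\form(\x)$ are the image and kernel of $\x$ viewed as a map $\N\to A$, and your argument is exactly the routine verification of that fact. No issues.
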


A typical equivalence class of $\form(\x)$ will be written as $[a]_\x:=\{ i\in\N\::\: x_i=a\}$ where $a\in \cont(\x)$.

It is well-known that the variety generated by a finite algebra is locally finite \cite[Theorem 10.16]{bs81}, and hence has only finitely many $k$-generated members for any fixed finite $k$. Specialising this for our purposes, we record the following result, which can also be easily proved directly:

\begin{lemma}
\label{le:monfin}
For a finite unary algebra $\A$ there are only finitely many monogenic subalgebras of $\A^\N$ up to isomorphism.
\end{lemma}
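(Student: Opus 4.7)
The plan is to show that the isomorphism type of the monogenic subalgebra $\langle\x\rangle\leq\A^\N$ depends only on $\cont(\x)\subseteq A$, which leaves at most $2^{|A|}$ possibilities.

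First I would unpack the structure of a monogenic subalgebra. Since $\A^\N$ is a unary algebra whose operations act coordinatewise, the subalgebra generated by $\x$ is precisely
\[
\langle\x\rangle=\{f(\x)\::\:f\in\Mon(\A)\},
\]
where we read $f(\x)$ coordinatewise and include the identity in $\Mon(\A)$. Crucially, $f(\x)=g(\x)$ if and only if $f(x_i)=g(x_i)$ for every $i\in\N$, i.e.\ if and only if $f$ and $g$ agree on $\cont(\x)$.

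Now suppose $\x,\y\in A^\N$ satisfy $\cont(\x)=\cont(\y)=:S$. I would define
\[
\phi:\langle\x\rangle\rightarrow\langle\y\rangle,\qquad \phi(f(\x))=f(\y)\quad(f\in\Mon(\A)).
\]
By the observation above, $f(\x)=g(\x)$ iff $f|_S=g|_S$ iff $f(\y)=g(\y)$, so $\phi$ is well-defined and injective; it is obviously surjective. To see that $\phi$ is a homomorphism, pick any $h\in F$ and $f\in\Mon(\A)$: then $h\circ f\in\Mon(\A)$ and
\[
\phi\bigl(h(f(\x))\bigr)=\phi\bigl((h\circ f)(\x)\bigr)=(h\circ f)(\y)=h\bigl(\phi(f(\x))\bigr).
\]
Hence $\langle\x\rangle\cong\langle\y\rangle$ whenever $\cont(\x)=\cont(\y)$.

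Since $A$ is finite, the set $\{\cont(\x)\::\:\x\in A^\N\}$ is a subset of the power set $2^A$, which has only finitely many elements. The preceding paragraph shows that the map $\x\mapsto\text{iso-class of }\langle\x\rangle$ factors through the map $\x\mapsto\cont(\x)$, so there are at most $2^{|A|}$ isomorphism classes of monogenic subalgebras of $\A^\N$, completing the proof. The only non-routine step is verifying well-definedness of $\phi$, but this reduces immediately to the basic observation that $f(\x)$ depends on $\x$ only through its content.
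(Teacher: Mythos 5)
Your proof is correct, but it takes a genuinely different route from the paper's. The paper uses the \emph{format} of the generating tuple: since $\form(\a)$ has at most $n=|A|$ classes, choosing one coordinate from each class and projecting gives an embedding of $\langle\a\rangle$ into the finite power $\A^n$, which has only finitely many subalgebras. You instead work with the \emph{content}, observing that $f(\x)=g(\x)$ exactly when $f$ and $g$ agree on $\cont(\x)$, so the assignment $f(\x)\mapsto f(\y)$ is a well-defined isomorphism $\langle\x\rangle\to\langle\y\rangle$ whenever $\cont(\x)=\cont(\y)$; all the verifications you give (well-definedness, bijectivity, compatibility with each $h\in F$) are sound, and the identity being in the monoid $\Mon(\A)$ ensures $\x\in\langle\x\rangle$. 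Your argument actually proves a sharper structural fact -- the isomorphism type of a monogenic subalgebra is completely determined by the content of its generator -- and yields the explicit bound $2^{|A|}-1$, whereas the paper's crude count of subalgebras of $\A^n$ is much larger. The paper's version has the mild advantage of reusing the format machinery (Lemma \ref{le:formsub} and the surrounding discussion) that drives the rest of Section \ref{sec:uncountable}, but either proof is complete and short.
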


%\begin{proof}
%Consider the monogenic subalgebra of $\A^\N$ generated by some tuple $\a$.
%Since $\A$ is finite, $\form(\a)$ has no more than $|A|=n$ equivalence classes.
%Picking an arbitrary set $\{ r_1,\dots,r_n\}\subseteq\N$ which intersects each of these classes, and projecting onto the corresponding coordinates, defines an embedding of $\langle \a\rangle$ into the finite power $\A^n$.
%Since $\A^n$ clearly has only finitely many subalgebras, the result follows.
%\end{proof}

\section{Unary algebras of countable type}
\label{sec:countable}

In this section we prove the `if' part of the Main Theorem. 
As a prelude, we make an observation, which will help streamline the proof,
and will also be helpful in the next section.
For a unary algebra $\A=(A,F)$, let $\A_c$ be the new algebra on the same set obtained by adding all constant unary operations to $F$;
specifically, $\A^c=(A,F\cup F^c)$, with $F_c=\{ f_a\::\: a\in A\}$, $f_a(x)=a$ for all $x\in A$.

\begin{lemma}
\label{la:extc}
A finite unary algebra $\A$ is of countable type if and only if $\A_c$ is of countable type.
\end{lemma}

\begin{proof}
($\Rightarrow$) This direction actually holds for arbitrary finite algebras. 
Suppose, aiming for contradiction, that $\A$ is of countable type, while $\A_c$ is of uncountable type.
There exists an uncountable collection $\{ \B_i\::\: i\in I\}$ of mutually non-isomorphic countable subdirect powers  in 
$\A_c^\N$ such that their reducts $\B_i'$, $i\in I$, with respect to the operations $F$, are all isomorphic.
Notice that each $B_i$ contains the diagonal $D:=\{(a,a,\dots)\::\: a\in A\}$.
Fix $i_0\in I$ and isomorphisms $\phi_i : \B_i'\rightarrow \B_{i_0}'$, $i\in I$.
Since $D$ is finite and all the $B_i$ countable, there exist $i,j\in I$ such that $\phi_i(d)=\phi_j(d)$ for all $d\in D$.
But then $\phi_j^{-1}\phi_i:B_i\rightarrow B_j$ is an isomorphism between $\B_i'$ and $\B_j'$, and it fixes all the elements of $D$. Therefore $\phi_j^{-1}\phi_i$ is an isomorphism between $\B_i$ and $\B_j$, a contradiction.

($\Leftarrow$)
Now suppose, again aiming for contradiction, that $\A$ is of uncountable type and $\A_c$ is of countable type.
Let $\{ \B_i\::\: i\in I\}$ be an uncountable collection of pairwise non-isomorphic countable subdirect powers in $\A^\N$.
For $i\in I$, let $\B_i'$ be the subalgebra of $\A_c^\N$ with the carrier set $B_i\cup D$.
(Notice that this step is not valid for non-unary algebras.)
Since $D$ is finite and $\A_c$ of countable type, there exist 
$i,j\in I$ such that $\B_i'\cong \B_j'$ and $B_i\cap D= B_j\cap D$.
Let $\phi: \B_i'\rightarrow\B_j'$ be an isomorphism.
Notice that $\phi(d)=d$ for all $d\in D$.
It follows that $\phi(B_i)=B_j$, and hence $\phi$ is an isomorphism between $\B_i$ and $\B_j$, a contradiction.
\end{proof}

\begin{prop}
\label{prop:mainif}
If in a finite unary algebra $\A=(A,F)$ every $f\in F$ is a bijection or a constant mapping then $\A$ is of countable type.
\end{prop}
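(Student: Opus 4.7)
The plan is to reduce the classification of isomorphism types of countable subdirect powers of $\A$ to the classification of countable $G$-sets, where $G$ is the finite group generated by the bijections in $F$.

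First I would split $F=F_b\sqcup F_c$ into bijections and constants. If $F_c=\es$, Lemma~\ref{le:allbij} finishes the proof, so assume $F_c\neq\es$. Set $G:=\langle F_b\rangle$, a finite subgroup of $\operatorname{Sym}(A)$ since $A$ is finite and $F_b$ consists of bijections. Let $C\subseteq A$ be the set of values of the constants in $F_c$, and put $D_0:=G\cdot C$. A short check shows that $\Mon(\A)$ is the disjoint union of $G$ with the constant maps $\{\operatorname{const}_d\::\: d\in D_0\}$: composing bijections stays in $G$, while any composition involving a constant is itself a constant, whose value is adjusted only by the bijections applied after the last constant. In particular, every subdirect power $B\leq\A^\N$ contains the set
\[
 \Delta:=\{(d,d,d,\dots)\::\: d\in D_0\},
\]
obtained from any $\x\in B$ by applying some $\operatorname{const}_c\in F_c$ and then acting componentwise by $G$; note that $\Delta$ is $G$-invariant.

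The crux of the argument, and what I expect to be the main obstacle, is the following characterisation of $\A$-isomorphisms: a bijection $\phi:B_1\rightarrow B_2$ is an $\A$-isomorphism between two subdirect powers if and only if it is $G$-equivariant and pointwise fixes $\Delta$. In the forward direction, rigidity of the constants yields
\[
 \phi((c,c,\dots))=\phi(\operatorname{const}_c(\x))=\operatorname{const}_c(\phi(\x))=(c,c,\dots)
\]
for every $c\in C$, and $G$-equivariance then propagates this fixing to all of $\Delta$. For the converse, a $G$-equivariant bijection pointwise fixing $\Delta$ automatically commutes with every $\operatorname{const}_c\in F_c$ and with every element of $F_b\subseteq G$, hence with every element of $F$.

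The remainder is a counting exercise. Because $\Delta$ is $G$-invariant, each $G$-orbit of $B$ is either contained in $\Delta$ or disjoint from it, so $B=\Delta\sqcup B^\circ$ where $B^\circ$ is the union of the ``free'' $G$-orbits. An $\A$-isomorphism $B_1\rightarrow B_2$ is thus forced to be the identity on $\Delta$ and may be taken to be any $G$-set isomorphism $B_1^\circ\rightarrow B_2^\circ$, so the $\A$-isomorphism type of $B$ is determined by the $G$-set isomorphism type of $B^\circ$. Since $G$ is finite, there are only finitely many isomorphism types of transitive $G$-sets (indexed by conjugacy classes of subgroups), and any countable $G$-set is a countable multiset over this finite alphabet, giving at most countably many isomorphism types. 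Once the iso characterisation is in hand, this yields the proposition.
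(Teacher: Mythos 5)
Your proof is correct and follows essentially the same route as the paper's: both isolate the diagonal tuples forced into every subdirect power by the constant operations, show that any isomorphism between two such subalgebras can be taken to fix those tuples pointwise, and reduce the count to the structure determined by the bijections alone. Your $G$-set classification is a self-contained repackaging of the paper's appeal to Lemma~\ref{le:allbij} (orbits are exactly the monogenic subalgebras, of finitely many isomorphism types) together with Lemma~\ref{le:coniso1} (a partial matching of components extends to an isomorphism).
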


\begin{proof}
By applying Lemma \ref{la:extc} twice, we may assume without loss of generality that $F$ consists of bijections only.
But then, every monogenic subalgebra of $\A^\N$ is a strongly connected component. 
By Lemma \ref{le:monfin} there are only finitely many non-isomorphic strongly connected components, say $\C_1,\dots,\C_k$.
Every subalgebra of $\A^\N$ is a union of monogenic subalgebras, and hence a disjoint union of 
strongly connected components.
Thus the isomorphism type of such a subalgebra is determined by a sequence $(m_1,\dots, m_k)$,
where  $m_i\in\N\cup\{0,\aleph_0\}$ is the number of occurences of $\C_i$ in the disjoint union.
There are only countably many such sequences and the result follows.
\end{proof}

\begin{rem}
In fact, a much stronger statement is true than that used in the above deduction regarding the case where $F$ consists of bijections only: the variety generated by $\A$ contains only countably many countable members up to isomorphism; this is observed, for instance, in the introduction of \cite{hart94}.
\end{rem}

%\begin{prop}
%\label{prop:mainif}
%If in a finite unary algebra $\A=(A,F)$ every $f\in F$ is a bijection or a constant mapping then $\A$ is of countable type.
%\end{prop}
%
%\begin{proof}
%Let $F=F_b\cup F_c$, where $F_b$ consists of all bijections in $F$, and $F_c$ of all constant mappings,
%and let $\A':=(A,F_b)$.
%By Lemma \ref{le:allbij} the algebra $\A'$ is of countable type.
%The connected and strongly connected components of this algebra coincide.
%The carrier sets of subalgebras of $\A^\N$ are 
%precisely the carrier sets of subalgebras of $(\A')^\N$ which contain the set
%$D_c=\{ (f(a),f(a),f(a),\dots)\::\: a\in A,\ f\in F_c\}$.
%In $\A'$ the set $D_c$ splits up into finitely many strongly connected components $C_1,\dots, C_k$ (where possibly $k=1$).
%These remain strongly connected components in any subalgebra of $(\A')^\N$ that contains $D_c$.
%It now follows from Lemma \ref{le:coniso1} that if $\B_1$, $\B_2$ are isomorphic subalgebras of $(\A')^\N$ both containing $D_c$ then there exists an isomorphism $\phi:\B_1\rightarrow \B_2$ such that $\phi(x)=x$ for all $x\in D_c$. This $\phi$ is then an isomorphism between $\B_1$, $\B_2$ when considered as subalgebras of $\A^\N$.
%Therefore, up to isomorphism, the number of subalgebras of $\A^\N$ is no greater than that of $(\A')^\N$, and the result follows.
%\end{proof}

An immediate interesting consequence of Proposition \ref{prop:mainif} is the following:

\begin{cor}
Every unary algebra of size $2$ is of countable type.
\end{cor}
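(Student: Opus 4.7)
The plan is to reduce the corollary to Proposition \ref{prop:mainif} by observing that on a two-element carrier set every unary operation is automatically of the required form. Concretely, let $A=\{a_1,a_2\}$ and consider an arbitrary unary operation $f:A\to A$. There are only four such functions; two of them send both elements to the same point (hence are constant), while the remaining two (the identity and the transposition) are bijections. Thus any $F$ consisting of unary operations on a $2$-element set satisfies the hypothesis of Proposition \ref{prop:mainif}.

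Having made this observation, the proof essentially writes itself: one invokes Proposition \ref{prop:mainif} to conclude that $\A$ is of countable type. There is no real obstacle here, since the combinatorics of functions $\{a_1,a_2\}\to\{a_1,a_2\}$ is trivial; the only thing worth emphasizing in the write-up is that the classification into bijections and constants is exhaustive precisely because $|A|=2$, and that this is exactly the property that fails for $|A|\geq 3$ (where, for instance, a non-injective non-constant map on a $3$-element set can already exist, motivating the necessity direction still to be proved in the paper).
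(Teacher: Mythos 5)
Your argument is exactly the one the paper intends: since every unary operation on a two-element set is either a bijection or a constant map, Proposition \ref{prop:mainif} applies directly, which is why the paper presents the corollary as an immediate consequence without further proof. Your write-up is correct and complete.
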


\section{Unary algebras of uncountable type}
\label{sec:uncountable}

This section is devoted to proving the `only if' part of the Main Theorem:

\begin{prop}
\label{pro:oif}
A finite unary algebra $\A=(A,F)$ such that $F$ contains an operation which is neither a bijection nor a constant mapping is of uncountable type.
\end{prop}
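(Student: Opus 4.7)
The plan is, for each subset $S$ in an uncountable family of subsets of $\N$, to construct a subdirect power $\B_S\leq\A^\N$ such that distinct $S$ yield non-isomorphic $\B_S$; this will produce $2^{\aleph_0}$ non-isomorphic countable subdirect powers. The isomorphism invariant I will use is the multiset of $f$-preimage counts: for $\x\in\B$ put $d(\x):=|\{\y\in\B\,:\,f(\y)=\x\}|$. Because any isomorphism of $(\B,F)$ commutes with $f\in F$, the multiset $\{d(\x)\,:\,\x\in\B\}$ is preserved; the subalgebras $\B_S$ will be designed so that $S$ can be read off from this multiset.

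I would first invoke the hypotheses: since $f$ is not a bijection, fix distinct $a,b\in A$ with $f(a)=f(b)=:c$; since $f$ is not constant, $|A|\geq 3$ and $|f(A)|\geq 2$. Because $A$ is finite, $f$ has a cycle, and I would pick an element $\alpha\in A$ on such a cycle, with properties convenient to the construction (a small case analysis on whether $c=\alpha$, or whether $f$ has a fixed point, dictates the best choice). For each $n\geq 2$ in the chosen index set, I would reserve a disjoint block $P_n\subseteq\N$ of size $n$ and build the \emph{$n$-widget}: the tuple $W_n\in A^\N$ valued $c$ on $P_n$ and $\alpha$ off $P_n$, together with tuples $\x_{n,1},\ldots,\x_{n,n}$, where $\x_{n,j}$ takes value $a$ at the $j$th position of $P_n$, value $b$ at the other positions of $P_n$, and an appropriate $f$-preimage of $\alpha$ elsewhere. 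Direct computation yields $f(\x_{n,j})=W_n$ for each $j$, and the $\x_{n,j}$ are pairwise distinct, so $W_n$ will acquire at least $n$ $f$-preimages in any subalgebra containing the $\x_{n,j}$.

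Then I would let $\B_S$ be the subalgebra of $\A^\N$ generated by all the $\x_{n,j}$ (for $n\in S$, $1\leq j\leq n$) together with all constant tuples $\bar y$ ($y\in A$), the latter guaranteeing subdirectness. The crux will be to enumerate the elements of $\B_S$ and verify (a) that $d(W_n)=n+r$ for some non-negative integer shift $r$ depending only on $\A$ (with $r=0$ in the generic case when $W_n$ is aperiodic under $f$ in $\A^\N$); and (b) that every other element of $\B_S$ has $d$-value in a finite set $T\subseteq\N$ depending only on $\A$. Choosing $S\subseteq\{n\in\N\,:\,n+r>\max T\}$, the set $S$ is then recoverable from the multiset of $d$-values of $\B_S$, so distinct $S$ give non-isomorphic $\B_S$; as there are $2^{\aleph_0}$ such $S$, we obtain uncountably many non-isomorphic countable subdirect powers.

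The hard part will be the verification of (a)--(b): ruling out accidental $f$-preimages of $W_n$ coming from iterates $f^k(W_{n'})$ or $f^k(\x_{n',j'})$ (controlled by placing distinct widgets on disjoint coordinates and ensuring $W_n$ is aperiodic, or else absorbing the shift $r$); from images $g(\x_{n,j})$ and $g(W_n)$ under operations $g\in F\setminus\{f\}$, which is most delicate when $\Mon(\A)$ acts non-trivially on $\{a,b,\alpha\}$; and from interactions with the constant tuples. Several degenerate cases---$c=\alpha$, $f$ admitting no fixed point, cycles of length greater than one, etc.---will require modest adaptations of the widget, typically the addition of a ``marker'' coordinate bearing a distinguishing value.
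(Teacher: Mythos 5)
Your strategy (in-degree multisets under $f$, plus ``widgets'' $W_n$ with $n$ designated $f$-preimages) is genuinely different from the paper's, which instead builds generators $\t_{k,l}$ whose formats are unions of classes of congruences $\sigma_k$ modulo distinct primes, so that any two classes of distinct $\sigma_k$'s meet; this forces the generated subalgebras $\T_k$ to intersect only inside the diagonal and lets the paper distinguish the unions $\S_K$ by counting top components of outer sections. Your invariant is legitimate (isomorphisms commute with $f$, so the function $v\mapsto|\{\x: d(\x)=v\}|$ is preserved), and the widget idea is sound in spirit. But the two claims (a)--(b) on which everything rests are exactly the content of the proof, and as stated they do not hold.

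Concretely: (b) is false in general. For any $g\in\Mon(\A)$ with $g(a)\neq g(b)$ but $fg(a)=fg(b)$, the tuple $V_n$ with value $fg(a)$ on $P_n$ and $fg(\alpha')$ off $P_n$ acquires the $n$ distinct preimages $g(\x_{n,1}),\dots,g(\x_{n,n})$, and $V_n$ need not equal $W_n$ (take $F=\{f,p\}$ with $p$ a permutation carrying the collision pair $\{a,b\}$ of $f$ to a different collision pair). So the multiset of $d$-values contains several families growing linearly in $n$, one for each ``collapsing type'' in $\Mon(\A)$, and recovering $S$ requires an additional argument (e.g.\ taking $S$ inside a sparse set so that the finitely many linear ranges $m_1^{(h)}n+m_0^{(h)}$ occupy pairwise disjoint intervals). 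Second, the degenerate case is not merely cosmetic: if the only cycle of $f$ is a fixed point at $c=f(a)=f(b)$ (e.g.\ $f(a)=f(b)=f(c)=c$, $f(d)=a$ on a four-element set), then $\alpha=c$ is forced, every $W_n$ equals the constant tuple $\bar{c}$, and $d(\bar{c})$ is infinite for every infinite $S$, so the invariant sees nothing; a marker coordinate does not help, since the marker value of $W_n=f(\x_{n,j})$ is independent of $n$ and the block $P_n$ still carries the background value $c$. The fix is to replace the off-block background by some $v$ with $f(v)\neq c$ (which exists since $f$ is non-constant), but that is a redesign you have not carried out, and it changes the preimage analysis. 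Until the exact count $d(\z)$ is pinned down for \emph{every} $\z\in\B_S$ in all cases, the proof is not complete; the paper's choice of formats $\sigma_k$ that pairwise ``interlock'' is precisely the device that makes the analogous bookkeeping trivial there.
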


\begin{proof}
Let us suppose that
\[
|A|=n,\ A=\{a_1,\dots,a_n\}.
\]
Let $\fmin\in\Mon(\A)$ be a non-constant operation with minimal possible image size. Thus $1<|\fmin(A)|<n$, and without loss of generality we may assume that
\begin{equation}
\label{eq:fan}
\fmin(a_{n-1})=\fmin(a_n).
\end{equation} 

\begin{claim}
\label{cl:inteqs}
There exist equivalence relations $\sigma_1,\sigma_2,\dots$ on the set $\N$ such that the following hold:
\begin{thmenumerate}
\item
\label{it:ie1}
each $\sigma_k$ has a finite number $c_k$ of equivalence classes;
\item
\label{it:ie2}
$n\leq c_1<c_2<\dots$;
\item
\label{it:ie3}
if $C$ is an equivalence class of some $\sigma_k$, and  if $D$ is an equivalence class of some $\sigma_l$ with $l\neq k$, then
$C\cap D\neq\emptyset$.
\end{thmenumerate}
\end{claim}

\begin{proof}
We can take $\sigma_k$ to be the congruence modulo $p_k$, where $n\leq p_1<p_2<\dots$ is an increasing sequence of primes.
\end{proof}

We fix one choice of $\sigma_1,\sigma_2,\dots$ as above for the rest of the proof of the proposition.

Consider an arbitrary $k\in \mathbb{N}$, and 
 denote the equivalence classes of $\sigma_k$ by $C_{k,i}$, $i=1,\dots,c_k$.
 For each $l=0,\dots,c_k-n$ define a 
tuple $\t_{k,l}\in\A^\N$ with full content $A$, and
\begin{equation}
\label{eq:tkl}
[a_i]_{\t_{k,l}}=
\begin{cases}
C_{k,i} & \text{if } 1\leq i\leq n-2\\
C_{k,n-1}\cup\dots\cup C_{k,n-1+l} & \text{if } i=n-1\\
C_{k,n+l}\cup\dots\cup C_{k,c_k} & \text{if } i=n.
\end{cases}
\end{equation}
Notice that 
\begin{equation}
\label{eq:sftkl}
\sigma_k\subseteq \form(\t_{k,l})\quad\text{for all } l=0,\dots, c_k-n.
\end{equation}
Also observe that
the only places in which two tuples $\t_{k,l}$ and $\t_{k,m}$ differ are those where one has $a_{n-1}$ and the other $a_n$. Since $\fmin(a_{n-1})=\fmin(a_n)$ it follows that 
\begin{equation}
\label{eq:fmintkl}
\fmin (\t_{k,l})=\fmin (\t_{k,m})\quad \text{for all } l,m\in\{0,\dots,c_k-n\}.
\end{equation}
In particular, the subalgebra
\[
\T_k:=\langle \{ \t_{k,l}\::\: l=0,\dots,c_k-n\}\rangle\leq \A^\N
\]
is connected.

\begin{claim}
\label{cl:Tk}
$\T_k$  has exactly $c_k-n+1$ top  components.
\end{claim}

\begin{proof}
In any unary algebra with a given set of generators each top component must contain a generator.
For $l\neq m$ the formats of $\t_{k,l}$ and $\t_{k,m}$ are incomparable via inclusion.
Therefore, by Lemma \ref{le:formsub}, we have $f(\t_{k,l})\neq \t_{k,m}$ for all $f\in\Mon(\T_k)$.
Therefore the strongly connected component of each generator $\t_{k,l}$ is a top component,
and these are all distinct.
\end{proof}

It follows immediately from the previous claim that the algebras $\T_k$ are pairwise non-isomorphic. We will now 
use these algebras as the building blocks from which to construct uncountably many non-isomorphic subdirect powers of $\A$.

To motivate what follows, let us briefly consider the case where the $\T_k$ are pairwise disjoint.
One can see from what follows that this is the case precisely when $\Mon(\A)$ contains no constant mappings.
For any $K\subseteq \N$ let $S_K:=\bigcup_{k\in K} T_k$. Then this is the carrier of a subalgebra $\S_K$ of $\A^\N$.
The connected components of $\S_K$ are precisely $\T_k$, $k\in K$.
Therefore, for $K\neq L$ we have $\S_K\not\cong \S_L$, because the $\T_k$ are pairwise non-isomorphic.
Two issues arising from this argument is that the $\T_k$ need not be disjoint, and that $\S_K$ may not be subdirect.

We now deal with these issues, and hence with the general case. 
By Lemma \ref{la:extc}, we may assume without loss of generality that all constant operations are among the basic operations of $\A$.
%It turns out that even though the $\T_k$ need not be disjoint, their intersection is easy to describe. 
%To this end, let us denote by $A_0$ the subset of $A$ consisting of the images of constant mappings from $\Mon(\A)$
%(allowing for the possibility that $A_0=\emptyset$), and let 
%\[
%D_0:=\{ (a,a,a,\dots)\::\: a\in A_0\},
%\]
%a subset of the diagonal
Let $D:=\{(a,a,a,\dots)\::\: a\in A\}$, the diagonal of $\A^\N$.
%If $D_0\neq\emptyset$ then it is the unique bottom component of $\A^\N$ and therefore of any subalgebra of $\A^\N$.

\begin{claim}
\label{cl:Tinter}
For any distinct $k,l\in\N$ we have $T_k\cap T_l=D$.
%$T_k\cap T_l=T_k\cap D=D_0$.
\end{claim}

\begin{proof}
We have $D\subseteq T_k$ for all $k$ because of the presence of the constant operations.
%We have already seen that $D_0\subseteq T_k\cap T_l\cap D$. 
%Consider an arbitrary $\x\in T_k\cap D$.
%Thus $\x=(x,x,x,\dots)=f(\t_{k,i})$ for some $i$ and some $f\in\Mon(\A)$. Since $\cont(\t_{k,i})=A$ it follows that $f$ is a %constant map, and therefore $\x\in D_0$, proving that $T_k\cap D= D_0$.
Now consider an arbitrary $\x\in T_k\cap T_l$. Write $\x=f(\t_{k,i})=g(\t_{l,j})$, where $0\leq i\leq c_k-n$, $0\leq j\leq c_l-n$, $f,g\in\Mon(\A)$.
By Lemma \ref{le:formsub} and \eqref{eq:sftkl} we have
\[
\sigma_k\subseteq \form(\t_{k,i})\subseteq \form(\x)\quad\text{and}\quad
\sigma_l\subseteq \form(\t_{l,j})\subseteq \form(\x).
\]
Hence $\form(\x)$ contains $\sigma_k\vee\sigma_l$, the smallest equivalence relation on $\N$ containing both $\sigma_k$ and $\sigma_l$. But $\sigma_k\vee\sigma_l$ is in fact the full relation on $\N$, because of Claim \ref{cl:inteqs} \ref{it:ie3}.
Hence $\x\in D$, 
yielding $T_k\cap T_l\subseteq D$, and completing the proof of the claim.
%and therefore $\x\in D_0$ by the previous paragraph, so that
%$T_k\cap T_l=D_0$ as required.
\end{proof}

\begin{claim}
\label{cl:outTk}
The subgraph of $\overline{\Gamma}(\T_k)$ induced on $T_k\setminus D$ is connected.
\end{claim}

\begin{proof}
By \eqref{eq:fmintkl} we have
$\fmin (\t_{k,i})=\fmin (\t_{k,j})$ for all $i,j\in\{ 0,\dots, c_k-n\}$.
Since $\cont(\t_{k,i})=A$ and $\fmin$ is not a constant mapping, we have $\fmin (\t_{k,i})\not\in D$, 
and the claim follows.
\end{proof}

Now, for any subset $K\subseteq \N$ let us define
$S_K:=\bigcup_{k\in K} T_k$.
This is the carrier set for a subalgebra of $\A^\N$, and we denote this subalgebra by $\S_K$.
Since $D\subseteq S_K$ it follows that $\S_K$ is in fact a subdirect power of $\A$.

\begin{claim}
\label{cl:SKSL}
For any two distinct subsets $K,L\subseteq \{2n,2n+1,2n+2,\dots\}$ we have $\S_K\not\cong \S_L$.
\end{claim}

\begin{proof}
By Claim \ref{cl:Tinter}, the outer sections of $\S_K$ (resp, $\S_L$) are $T_k\setminus D$ for $k\in K$
(resp. $T_l\setminus D$, $l\in L$).
%plus any further outer sections contained in $D\setminus D_0$. 
 Without loss we may assume that there exists $k\in K\setminus L$.
 By Claim \ref{cl:Tk}, the outer section $T_k\setminus D$ of $\S_K$ has $c_k-n+1$ top components.
 By the same claim, and Claim \ref{cl:inteqs} \ref{it:ie1}, \ref{it:ie2} none of the outer sections $T_l\setminus D$ ($l\in L$) of $\S_L$ have exactly that number of top components. 
 %Furthermore, any outer section contained in $D\setminus D_0$ has at most $|D|=|A|=n<c_k-n+1$ top components.
 Therefore $\S_K\not\cong \S_L$, as required.
\end{proof}

This exhibits uncountably many subdirect powers of $\A^\N$, and completes the proof of the proposition.
\end{proof}

\section{Remarks on infinite cardinals}\label{sec:infinite}

It is of note that the result of McKenzie in~\cite{mckenzie1982subdirect} deals with subdirect powers of arbitrary non-abelian groups, and subdirect powers of cardinality $\kappa$ greater than or equal to the order of the group.
In this section we discuss examples chosen to indicate that our 
Main Theorem may not extend to a similarly concise result. 
We begin by characterising countable type
for countable algebras with a single bijective basic operation. 
We then discuss arbitrary cardinalities of subdirect powers, and show how the unary algebras differ from groups in this regard.

\begin{prop}\label{pro:infbij}
Let $(A,f)$ be a countable monounary algebra where $f$ is a bijection. Then $\A$ is of countable type if and only if there is a bound on the length of finite cycles contained in $\A$.
\end{prop}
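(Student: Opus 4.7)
The plan is to split the proof along the two implications, using throughout the structural observation that, since $f$ is bijective on $A^\N$, every subalgebra $\B\le\A^\N$ decomposes as a disjoint union of $f$-orbits, each of which is either a finite cycle (of length equal to the $\lcm$ of the cycle lengths of the coordinates of any of its elements) or a bi-infinite $\Z$-orbit (arising precisely when that $\lcm$ is infinite). Consequently, the isomorphism type of a countable monounary algebra with bijective operation is completely determined by the cardinalities of cycles of each length together with the cardinality of $\Z$-orbits, so counting isomorphism classes of countable subdirect powers reduces to counting such data.

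For the ``if'' direction, suppose all cycle lengths in $\A$ are bounded; then their $\lcm$ $M$ is a finite integer, and every finite cycle in $\A^\N$ has length dividing $M$. The isomorphism type of any countable $\B\le\A^\N$ is therefore encoded by a function from the finite set of divisors of $M$ into $\{0,1,\dots,\aleph_0\}$, together with one further cardinal for the number of $\Z$-orbits, and there are only countably many such data.

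For the ``only if'' direction, let $N\subseteq\N$ denote the (now infinite) set of cycle lengths of $\A$. For each $K\subseteq N$ the plan is to construct a countable subdirect power $\B_K\le\A^\N$ whose set of finite cycle lengths is exactly $K$; since this set is an isomorphism invariant, the $2^{\aleph_0}$ choices of $K$ produce uncountably many non-isomorphic $\B_K$. Let $A_K\subseteq A$ be the union of the $\A$-cycles of length in $K$ and $D_K=\{(a,a,a,\dots):a\in A_K\}$ its diagonal inside $\A^\N$, a subalgebra contributing finite cycles of precisely the lengths in $K$. For each $\A$-cycle $C$ with $|C|\in N\setminus K$ and each coordinate $i\in\N$, adjoin the $f$-orbit of a tuple $\x^{i,C}\in A^\N$ with $x^{i,C}_i\in C$ and with remaining entries chosen from $\A$-cycles whose lengths traverse an unbounded subset of $N$, so the $\lcm$ of entry cycle lengths is infinite and the orbit is a $\Z$-orbit. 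Taking $\B_K$ to be the union of $D_K$ with all these $\Z$-orbits yields a countable subalgebra: it is subdirect, as every $(a,i)$ with $a\in A_K$ is covered by $D_K$ and every $(a,i)$ with $a$ lying in a cycle $C$ of length in $N\setminus K$ is covered by the orbit of $\x^{i,C}$; and its finite cycles are precisely those of $D_K$, with length set $K$.

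The main subtlety is ensuring that the adjoined orbits really are $\Z$-orbits, so that they contribute no spurious finite cycles that would corrupt the invariant; this is forced by insisting that the non-$i$ entries of $\x^{i,C}$ lie in $\A$-cycles of unbounded length. Framing the invariant as the \emph{set} of finite cycle lengths (rather than a cycle-count function) is essential in the case where $\A$ has $\aleph_0$ cycles of every length in $N$: multiplicity-based invariants there collapse because $\aleph_0+1=\aleph_0$, but the presence/absence of any cycle of length $n$ still distinguishes $\B_K$ from $\B_L$ whenever $n\in K\triangle L$.
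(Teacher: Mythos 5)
Your overall strategy matches the paper's: analyse the connected components of subalgebras of $\A^\N$ via the cycle lengths of the coordinate entries, conclude countability of the type when these lengths are bounded, and in the unbounded case separate continuum many subdirect powers by the \emph{set} of finite cycle lengths they contain, which is exactly the invariant the paper uses. Your ``only if'' construction is a legitimate variant of the paper's (the paper fixes a single countable covering family of tuples lying in infinite components and varies which diagonal cycles to adjoin; you vary the diagonal part and adjoin covering infinite orbits for the remaining cycles). However, there are two concrete gaps.

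First, the structural claim that every subalgebra of $\A^\N$ decomposes into finite cycles and bi-infinite $\Z$-orbits is false: a subalgebra need only be closed under $f$, not under $f^{-1}$, so the monogenic subalgebra generated by a tuple with unbounded coordinate cycle lengths is a one-sided chain isomorphic to $(\N,x\mapsto x+1)$ (the paper lists this third possibility explicitly). As a result, your proposed invariant for the ``if'' direction (divisor-counts plus the number of $\Z$-orbits) is not a complete invariant --- an algebra with one $\N$-chain and one with none receive the same code --- so the injection of isomorphism types into a countable set is not actually established. The repair is trivial (add one further cardinal counting $\N$-chains), but as written the count does not go through. Second, the hypotheses allow $\A$ to contain infinite cycles, and your $\B_K$ covers in each coordinate only the elements of $A$ lying in \emph{finite} cycles: those of length in $K$ via $D_K$, the rest via the orbits of the $\x^{i,C}$ with $|C|\in N\setminus K$. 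Elements lying in infinite cycles of $\A$ are never hit, so $\B_K$ need not be subdirect. Again the fix is immediate --- for each infinite cycle $C$ and each coordinate $i$ adjoin the full $\Z$-orbit of a tuple whose $i$-th entry lies in $C$; such orbits are infinite and contribute no finite cycles --- but as written your construction proves the ``only if'' direction only for algebras all of whose cycles are finite.
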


\begin{proof}
We begin by discussing the structure of $\A^\N$, so that we can classify subdirect powers in terms of their connected components. Let $\a=(a_1,a_2,\dots)\in\A^\N$, and define $\Seq(\a):=(s_1,s_2\dots)$ where $s_i$ is the length of the cycle containing $a_i$. If $\Seq(\a)$ is bounded, then for $m=\lcm(\Seq(\a))$
 we have that
 $\a\rightarrow f(\a)\rightarrow \dots\rightarrow f^{m-1}(\a)\rightarrow f^m(\a)=\a$ is a cycle of length $m$.
 If $\Seq(\a)$ is not bounded, i.e. if $\cont(\a)$ either contains elements from infinitely many finite cycles with unbounded length, or it contains at least one element from an infinite cycle, then $f^l(\a)\neq\a$ for all $l\in\N$. Additionally, since $f$ is a bijection, $f^{-l}(\a)\neq\a$ for all $l\in\N$, and so the connected component of $\a$ is isomorphic to $(\Z,x\mapsto x+1)$. Since the connected components of a subdirect power are subalgebras of connected components of the direct power, the only options are finite cycles, $(\Z,x\mapsto x+1)$ and $(\N,x\mapsto x+1)$, and there are either none or uncountably many of each (apart from possibly cycles of length one).

($\Leftarrow$)
Assume $\A$ has a bound on the length of its finite cycles.
It follows from the foregoing analysis that the lengths of finite cycles in $\A^\N$ are also bounded, and thus there are only finitely many isomorphism types of connected subalgebras of $\A^\N$. 
Using the fact that every unary algebra is a disjoint union of its connected components, we conclude
that there are countably many countable subdirect powers.

($\Rightarrow$)
Assume now that $\A$ contains infinitely many cycles of different lengths, and
 we will show that $\A$ is of uncountable type. We first exhibit a subdirect power which contains no cycles. Let $(y_1,y_2,\dots)$ be a sequence of elements of $\A$ such that each element is contained in a finite cycle, and the sequence of respective cycle lengths is strictly increasing.
We define for $a\in A$ the tuples $\alpha_a$ and $\beta_a$ as follows:
\[\alpha_a:=(a,y_1,a,y_2,a,y_3\dots),\]
\[\beta_a:=(y_1,a,y_2,a,y_3,a\dots).\]
Note $\alpha_a$ has $a$ in the odd coordinates, and the sequence $(y_1,y_2,\dots)$ in the even coordinates, and $\beta_a$ is defined analogously. Then define the set $S\subseteq\A^\N$ as $S:=\bigcup_{a\in A}\{\alpha_a,\beta_a\}$.
Any subalgebra which contains this set is subdirect. In particular the subalgebra $\langle S\rangle$ is subdirect. 
The elements of $S$ are contained in distinct infinite cycles. 
Hence $\langle S\rangle$ has countably many connected components all isomorphic to $(\N,x\mapsto x+1)$,
specifically, $\langle \gamma\rangle$  for each $\gamma\in S$.
Note that for $\y_i:=(y_i,y_i,\dots)$, the connected component of $\y_i$ is a cycle of the same length as the cycle
in $\A$ containing $y_i$. Then for any subset $Y\subseteq\{\y_1,\y_2,\dots\}$ the algebra $\T_Y:=\langle S\cup Y\rangle$ is a countable subdirect power. For $Y,Y'$ distinct subsets, $\T_Y\not\cong\T_{Y'}$, as the sets of finite cycles they contain are distinct. Thus if $\A$ contains infinitely many cycles, it is of uncountable type.
\end{proof}

Proposition \ref{pro:infbij} implies that the restriction to finite algebras in the statement of Proposition~\ref{pro:oif} is necessary. It is not the case that any countable unary algebra whose operations are all bijections or constant functions is of countable type. One might wonder if Proposition \ref{pro:infbij} extends to unary algebras in general. Two natural extensions which one might consider are that an algebra $(A,F)$ where $F$ contains only bijections is of countable type if and only if: a) the size of finite connected components is bounded; or b) for all $f\in F$ the length of finite cycles in $f$ is bounded. However, neither of these is true, as demonstrated by the following example.

\begin{example} \label{exa:transpositions}
Let $\T:=(\N,F)$, where $F$ is the set of all transpositions on $\N$ i.e. bijections which fix all but two points. We show that the algebra $\T$ is of uncountable type.

Let $\sigma$ be a format with finitely many classes, say $n$. Note that for two tuples $\x,\y\in\T^\N$ with format $\sigma$ the length of the shortest path between them in $\overline{\Gamma}(\T^\N)$ is at most $n$ (this can be proved easily by induction on $n$) and there exist such tuples for which the shortest path 
is of length precisely $n$, e.g. when their contents are disjoint.
It is then clear that the set $C_\sigma\subseteq\T^\N$ of tuples with format $\sigma$ is strongly connected and subdirect. Thus the set $\mathscr{C}=\{C_{\sigma_i}:i\in\N\}$ where $\sigma_i$ is a format with $i$ format classes is a countable collection of non-isomorphic connected countable subdirect powers of $\T$. Taking 
unions over arbitrary subsets of $\mathscr{C}$ then gives us uncountably many non-isomorphic subdirect products.
\end{example}

We now turn our attention to larger cardinalities of subdirect powers. 
Whereas, in the group case, McKenzie \cite{mckenzie1982subdirect} obtained exactly $2^\kappa$ subdirect products of cardinality $\kappa$, we exhibit an example of a finite unary algebra which is of uncountable type, but can have strictly less than $2^\kappa$ subdirect products.

\begin{prop}\label{pro:2linekappa}
Let $\A:=(A,f) $ be the monounary algebra with $A=\{0,1,2\}$ and $f(x)=\max(x-1,0)$, and let $\kappa$ be any infinite cardinal. The number of non-isomorphic subdirect powers of $A$ of cardinality $\kappa$ is $2^\alpha$, where $\alpha$ is the number of cardinals $\beta\leq\kappa$.
\end{prop}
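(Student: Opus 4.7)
The plan is to characterize the isomorphism type of any subalgebra $\S \leq \A^\kappa$ by a simple multiplicity function derived from its $f$-graph, and then count the possibilities directly. Since $\Mon(\A) = \{\mathrm{id}, f, f^2\}$ with $f^2$ the constant map to $0$, any $\S \leq \A^X$ stratifies as $\S = \{\0\} \sqcup L_1 \sqcup L_2$: the tuple $\0$ (with self-loop), the set $L_1$ of tuples with values in $\{0,1\}$ not identically $0$ (each mapped by $f$ to $\0$), and the set $L_2$ of tuples with at least one coordinate equal to $2$ (each mapped by $f$ into $L_1$). Thus $\Gamma(\S)$ is a ``bouquet'' rooted at $\0$, and since $\A$ is mono-unary, isomorphism of $\S$ as an algebra coincides with digraph isomorphism of $\Gamma(\S)$. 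The iso type is therefore entirely captured by the function $(m_\lambda(\S))_\lambda$, where $m_\lambda$ counts the number of $y \in L_1$ with exactly $\lambda$ preimages in $L_2$ under $f$.

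For the upper bound, $(m_\lambda)$ is a function defined on cardinals $\lambda \leq \kappa$ (since $m_\lambda \geq 1$ forces $\lambda \leq |\S| = \kappa$) with values among cardinals $\leq \kappa$; both sets have size $\alpha$, so there are at most $\alpha^\alpha = 2^\alpha$ such functions, using that $\alpha$ is infinite whenever $\kappa$ is.

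For the lower bound, I would first build a skeleton $\T \leq \A^\kappa$ with type $m_1(\T) = \kappa$, $m_\lambda(\T) = 0$ for $\lambda \neq 1$: for each $i < \kappa$ let $y_i$ have value $1$ at coordinate $i$ and $0$ elsewhere, and $x_i$ have value $2$ at $i$ and $0$ elsewhere (so $f(x_i) = y_i$); then $\T := \{\0\} \cup \{y_i, x_i : i < \kappa\}$ is a subdirect subalgebra of $\A^\kappa$ with $|\T| = \kappa$. Letting $\mathcal{C}$ be the set of cardinals $\lambda \leq \kappa$ with $\lambda \neq 1$ (of size $\alpha$), for each $T \subseteq \mathcal{C}$ I would augment $\T$ by adjoining, for each $\lambda \in T$, one fresh level-$1$ tuple $z_\lambda$ chosen with $|z_\lambda^{-1}(0)| = \kappa$ (so $z_\lambda$ admits $2^\kappa$ level-$2$ preimages in $\A^\kappa$) together with exactly $\lambda$ of those preimages, everything taken pairwise distinct and disjoint from what is already present. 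The resulting $\S_T$ is subdirect, has cardinality $1 + \kappa + |T| + \sum_{\lambda \in T} \lambda = \kappa$ (using $|T| \leq \alpha \leq \kappa$ and similarly for the sum), and has type $m_1 = \kappa$, $m_\lambda = 1$ for $\lambda \in T$, and $0$ otherwise. Distinct $T$'s then yield non-isomorphic $\S_T$, producing $2^\alpha$ examples matching the upper bound.

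The main obstacle will be the realizability bookkeeping in the lower bound: one must confirm that the $z_\lambda$'s and their prescribed $\lambda$-many preimages can actually be chosen pairwise distinct and disjoint from the skeleton, for each $\lambda \in T$ simultaneously. This is manageable because $\A^\kappa$ contains $2^\kappa$ level-$1$ tuples, and any such $y$ with $|y^{-1}(0)| = \kappa$ admits $2^\kappa$ level-$2$ preimages, leaving ample room for all $2^\alpha \leq 2^\kappa$ choices throughout.
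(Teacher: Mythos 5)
Your proposal is correct and follows essentially the same route as the paper: both describe $\Gamma$ of a subalgebra as a depth-$2$ tree rooted at $\0$, take the function $\lambda\mapsto m_\lambda$ recording how many level-$1$ elements have $\lambda$ predecessors as a complete isomorphism invariant to get the upper bound $\alpha^\alpha=2^\alpha$, and realize $2^\alpha$ distinct such functions by explicit subdirect subalgebras for the lower bound. The only cosmetic difference is that you work entirely inside $\A^\kappa$ and realize characteristic-function-type invariants, while the paper allows a larger exponent $\gamma$; both yield the same count.
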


\begin{proof}
For this proof we extend the definition of content and format to arbitrary cardinal powers.
We first describe the full product $\A^\gamma$ for any infinite cardinal $\gamma$. Since $f^2$ is the constant map on $\A$ with value $0$, it follows that $\Gamma(\A^\gamma)$ is a directed tree of depth $2$, with the sink 
$\0$, the diagonal element corresponding to 0.
The predecessors of $\0$ are the tuples $\x\in A^\gamma$ such that $\cont(\x)=\{0,1\}$.
Consider now an arbitrary such $\x$. Its predecessors are all tuples $\y\in A^\gamma$ such that
$\cont(\y)=A$, $[0]_\y\cup [1]_\y=[0]_\x$ and $[2]_\y=[1]_\x$. In particular, $\x$ has $2^{|[0]_\x|}$ predecessors.

Thus any subalgebra of cardinality at most $\kappa$ must consist of $\0$, at most $\kappa$ elements with content $\{0,1\}$, and for each such point at most $\kappa$ of its predecessors. Thus we can determine the isomorphism type of a subalgebra $\S\leq\A^\gamma$ by the number of predecessors of $\0$ with $\beta$ predecessors for every cardinal $\beta\leq\kappa$. Letting $\gamma$ be sufficiently large (in particular, such that there are at least $\alpha^\alpha$ elements with $\kappa$ predecessors), there are then  $\alpha^\alpha$ subalgebras of cardinality at most $\kappa$. This gives us an upper bound for the number of subdirect products of cardinality $\kappa$.
If we instead insist on selecting subalgebras which contain the diagonal, and precisely $\kappa$ elements outside the diagonal with exactly one predecessor in the subalgebra, then we can follow the same argument to obtain $\alpha^\alpha$ non-isomorphic subdirect powers of cardinality $\kappa$, which is a lower bound for the number of subdirect powers of cardinality $\kappa$. Thus there are exactly $\alpha^\alpha=2^\alpha$ subdirect powers of $\A$ of cardinality $\kappa$.
\end{proof}

This shows that the result of McKenzie, that non-abelian groups have $2^\kappa$ subdirect powers of cardinality $\kappa$ (for $\kappa\geq|G|$), will not generalize to arbitrary algebraic structures. In particular, if we assume the continuum hypothesis, then the number of subdirect powers of $\A$ of cardinality $\aleph_1$ is $2^{\aleph_0}<2^{\aleph_1}$.
%Continuum hypothesis may be necessary for the inequality to be strict.

\section{Further remarks: abelian property and boolean separation}
\label{sec:conclusion}

As we mentioned in the introduction, the result of  Hickin and Plotkin \cite{hickin1981boolean} implies that a finite group is of countable type if and only if it is abelian. There is a natural notion of being abelian for general algebras; see \cite[Section 4.13]{alv87}.
The question then arises as to whether being abelian and being of countable type are equivalent for some other or broader varieties than that of groups.
In particular, the question as to whether the two notions are equivalent in any congruence permutable variety seems well worth investigating.
Intriguingly, the investigation into commutative semigroups \cite{CRinpr} leaves open the possibility of two notions being equivalent in the variety of semigroups.
On the other hand, our main result here shows that the two notions are certainly not equivalent for unary algebras.
Indeed, every unary algebra is abelian, as can be easily seen by inspecting \cite[Definition 4.146]{alv87}, but we have seen that there exist unary algebras of uncountable type.

Another interesting parallel is that between  the number of subdirect powers and separation in boolean powers.
Let $\B$ be a boolean algebra, represented as an algebra of subsets of a set $X$, and let $\A$ be a finite algebra.
The \emph{boolean power} $\A^\B$ is the subdirect power of $\A^X$ consisting of all $f:X\rightarrow A$ for which each equivalence class of the format is an element of $\B$.
Note that if $\B$ is countable, then so is $\A^\B$.
For a more systematic introduction into boolean powers see \cite[Section IV.5]{bs81}.
We say that the algebra $\A$ is \emph{boolean separating} if for any non-isomorphic boolean algebras $\B_1$, $\B_2$ we have that $\A^{\B_1}\not\cong\A^{\B_2}$.
Clearly, if $\A$ is boolean separating then it is of uncountable type, since there are uncountably many countable boolean algebras.
Lawrence \cite{lawrence1981boolean} proved that a finite subdirectly irreducible group is boolean separating if and only if it is non-abelian.
Further similar results are obtained in \cite{hickin1981boolean,mckenzie1982subdirect}, and a complete description of finite 
boolean separating groups is obtained by Apps in \cite{ap82}.
Combined with our discussion in the first paragraph, Lawrence's result implies that a finite subdirectly irreducible group is boolean separating if and only if it is of uncountable type. Yet again a question arises whether this may hold in greater generality. And yet again, unary algebras provide a counterexample.

\begin{prop}
\label{pro:bsep}
Let $\A:=(A,f) $ where $A=\{0,1,2\}$ and $f(x)=\max(x-1,0)$.
If $\B_1$ and $\B_2$ are any two countable boolean algebras of subsets of $\N$ which contain all finite sets, then
 $\A^{\B_1}\cong \A^{\B_2}$.
In particular, $\A$ is subdirectly irreducible, is not boolean separating, but is of uncountable type.
\end{prop}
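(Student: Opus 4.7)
The key observation is that $f\circ f$ is the constant map on $\A$ with value $0$, so the same holds for the action of $f^2$ on $\A^\B$. Hence $\Gamma(\A^\B)$ is a rooted directed tree of depth at most $2$ with sink $\0:=(0,0,\dots)$, and the whole isomorphism type of $\A^\B$ is encoded in this rooted tree (an isomorphism of rooted trees is automatically an isomorphism of monounary algebras). I would therefore begin by cataloguing the depth-$1$ vertices and their predecessors. A depth-$1$ vertex is any $\x\neq\0$ with $\cont(\x)\subseteq\{0,1\}$, and such an $\x$ is determined by its support $T:=[1]_\x$, which ranges over $\B\setminus\{\emptyset\}$. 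A depth-$2$ predecessor of this $\x$ is any $\y$ with $[2]_\y=T$ and $[0]_\y\sqcup[1]_\y=\N\setminus T$, both $[0]_\y$ and $[1]_\y$ lying in $\B$. Such $\y$ correspond bijectively to $U\in\B$ with $U\subseteq\N\setminus T$ (put $U=[0]_\y$), so the predecessor count of $\x$ equals $|\{U\in\B:U\subseteq\N\setminus T\}|$.

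Next I would use that $\B$ contains every finite subset of $\N$ to tally these counts. The case $T=\N$ yields a single depth-$1$ element (the constant-$1$ tuple) with exactly one predecessor. For each $k\geq 1$, the cofinite $T\in\B$ with $|\N\setminus T|=k$ are countably many, and each yields a depth-$1$ element with exactly $2^k$ predecessors, since every subset of the finite set $\N\setminus T$ lies in $\B$. Every remaining nonempty $T\in\B$ has $\N\setminus T$ infinite and hence contributes a depth-$1$ element with infinitely many predecessors; assuming $\B$ countable, there are $\aleph_0$ such $T$, each with $\aleph_0$ predecessors. The resulting tally is independent of $\B$, so the rooted trees $\Gamma(\A^{\B_1})$ and $\Gamma(\A^{\B_2})$ are isomorphic, giving $\A^{\B_1}\cong\A^{\B_2}$.

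For the ``in particular'' clause I would verify directly that the congruence lattice of $\A$ consists of the trivial relation, the partition $\{\{0,1\},\{2\}\}$ and the universal relation; since the non-trivial proper congruence is unique, $\A$ is subdirectly irreducible. Failure of boolean separation then follows because there exist plenty of non-isomorphic countable boolean algebras of subsets of $\N$ containing all finite sets (e.g.\ the finite-cofinite algebra versus the boolean algebra generated by it together with the set of even integers), all of which yield isomorphic boolean powers by the first part. That $\A$ is of uncountable type is the content of Proposition~\ref{pro:2linekappa} taken at $\kappa=\aleph_0$.

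The main obstacle I foresee is essentially a bookkeeping one: confirming that the rooted tree $\Gamma(\A^\B)$ is determined up to isomorphism by the multiset of predecessor counts of its depth-$1$ vertices, and that the tally computed above is independent of $\B$. Everything else is routine once the combinatorial structure of $\A^\B$ is laid out as a tree of depth $2$.
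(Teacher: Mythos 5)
Your proposal is correct and follows essentially the same route as the paper: reduce to an isomorphism of the directed graphs (valid since the algebras are mono-unary), describe $\Gamma(\A^\B)$ as a depth-$2$ structure rooted at $\0$, and check that the tally of depth-$1$ vertices by predecessor count ($1$, $2^k$ for each $k\geq 1$, and $\aleph_0$, each occurring the same number of times) is independent of $\B$ because $\B$ contains all finite sets. Your explicit pair of non-isomorphic boolean algebras and your appeal to Proposition~\ref{pro:2linekappa} for uncountable type are harmless variations on the paper's citations, and your flagged ``obstacle'' (that a depth-$2$ rooted tree is determined by the multiset of predecessor counts) is routine.
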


\begin{proof}
Since the algebras $\A^{\B_1}$ and $\A^{\B_2}$ are mono-unary, we have that $\A^{\B_1}$ and $\A^{\B_2}$ are isomorphic if and only if 
the directed graphs $\Gamma(\A^{\B_1})$ and $\Gamma(\A^{\B_2})$ are isomorphic.
Let $\B$ stand for either of $\B_1$ or $\B_2$. We are going to describe the graph $\Gamma(\A^\B)$, and it will transpire from this description that
the graph does not depend on the algebraic structure of $\B$, from which it will follow that $\A^{\B_1}\cong\A^{\B_2}$.

Our description of $\Gamma(\A^\B)$ is analogous to what was seen in the proof of Proposition~\ref{pro:2linekappa}. We have a sink 
$\0:=(0,0,0,\dots)$,
its predecessors are the countably many tuples $\x$ from $\A^\B$ with $\cont(\x)=\{0,1\}$.
The predecessors of a given such $\x$ are all tuples $\y\in A^\N$ such that
$\cont(\y)=A$, $[0]_\y\cup [1]_\y=[0]_\x$ and $[2]_\y=[1]_\x$.
In particular, since $\B$ contains all finite sets, $\x$ has finitely many predecessors if and only if $[0]_\x$ is finite, in which case $\x$ has precisely $2^k$ predecessors, where $\bigl|[0]_\x\bigr|=k$.
For any $k\neq0$ there are countably many such $\x$, again because $\B$ contains all finite sets. There is precisely one $\x$ with $k=0$, namely $\1=(1,1,1,\dots)$.
The remaining countably many $\x$, with $[0]_\x$ infinite, all have countably many predecessors.
To sum up, $\Gamma(\A^\B)$ has a unique sink, which has countably many predecessors; of these predecessors, there is precisely one with a single predecessor, and there are countably many with $l$ predecessors, for each $l\in \{2^k\::\: k=2,3,\dots\}\cup\{\aleph_0\}$.
This completely describes the graph $\Gamma(\A^\B)$, and completes the proof of the isomorphism.

The above analysis shows that $\A$ is not boolean separating.
That $\A$ is subdirectly irreducible follows from the fact that it has a single proper congruence, namely the equivalence relation with classes $\{0,1\}$ and $\{2\},$ using the standard criterion for subdirect irreducibility \cite[Theorem 4.40]{alv87}. Finally, that $\A$ is of uncountable type follows immediately from our Main Theorem.
\end{proof}

\textbf{Acknowledgement.}
The authors are grateful to an anonymous referee for their helpful suggestions. Specifically, one of these suggestions has led us to explicitly formulate Lemma \ref{la:extc}, and streamline proofs of both directions of the Main Theorem.

\bibliographystyle{plain}

\end{document}